\documentclass[12pt]{amsart}
%%%%%%%%%%%%%%%%%%%%%%%%%%%%%%%%%%%%%%%%%%%%%%%%%%%%%%%%%%%%%%%%%%%%%%%%%%%%%%%%%%%%%%%%%%%%%%%%%%%%%%%%%%%%%%%%%%%%%%%%%%%%%%%%%%%%%%%%%%%%%%%%%%%%%%%%%%%%%%%%%%%%%%%%%%%%%%%%%%%%%%%%%%%%%%%%%%%%%%%%%%%%%%%%%%%%%%%%%%%%%%%%%%%%%%%%%%%%%%%%%%%%%%%%%%%%
\usepackage{amsfonts}
\usepackage{amssymb}
\usepackage{graphicx}
\usepackage{enumerate}
\usepackage{pgf,tikz}
\usepackage[letterpaper, left=2.5cm, right=2.5cm, top=2.5cm,
bottom=2.5cm,dvips]{geometry}
\setcounter{MaxMatrixCols}{10}

\usepackage[backref]{hyperref}
\hypersetup{
	colorlinks,
	linkcolor={blue!60!black},
	citecolor={green!60!black},
	urlcolor={red!60!black}
}
\linespread{1.3}
\newenvironment{proof1}[1][Proof]{\noindent\textbf{#1} }{\ \rule{0.5em}{0.5em}}

\newtheorem{theorem}{Theorem}
\theoremstyle{plain}

\newtheorem{conjecture}{Conjecture}

\newtheorem{lemma}{Lemma}

\newtheorem{proposition}{Proposition}

\numberwithin{equation}{section}

\begin{document}
\title[Dilworth's Theorem for Borel Posets]{Dilworth's Theorem for Borel Posets}

\author{Bart\l omiej Bosek}
\address{Theoretical Computer Science Department, Faculty of Mathematics and Computer Science, Jagiellonian
University, 30-348 Krak\'{o}w, Poland}
\email{bosek@tcs.uj.edu.pl}
\author{Jaros\l aw Grytczuk}
\address{Faculty of Mathematics and Information Science, Warsaw University
	of Technology, 00-662 Warsaw, Poland}
\email{j.grytczuk@mini.pw.edu.pl}
\author{Zbigniew Lonc}
\address{Faculty of Mathematics and Information Science, Warsaw University
	of Technology, 00-662 Warsaw, Poland}
\email{z.lonc@mini.pw.edu.pl}

\thanks{This work was supported by the European Regional Development Fund under the grant No. POIR.01.01.01-00-0124/17-00 and by FinAi S.A. funding.}

\begin{abstract}
A famous theorem of Dilworth asserts that any finite poset of width $k$ can be decomposed into $k$ chains. We study the following problem: given a Borel poset $P$ of finite width $k$, is it true that it can be decomposed into $k$ Borel chains? We give a positive answer in a special case of Borel posets embeddable into the real line. We also prove a dual theorem for posets whose comparability graphs are locally countable.
\end{abstract}

\maketitle

\section{Introduction}

Dilworth's theorem \cite{Dilworth} is a fundamental result in structural theory of posets (see \cite{Trotter}). It asserts that any finite poset $P$ of width $k$ is decomposable into $k$ chains. Recall that the \emph{width} of a poset $P$ is the maximum size of an antichain in $P$. Our aim is to verify whether analogous statement holds in the realm of Borel posets. A \emph{Borel poset} $P=(V,\preceq)$ is a poset whose comparability relation is a Borel subset of $V\times V$, where $V$ is a standard Borel space (see \cite{HarringtonMS}).

\begin{conjecture}
Every Borel poset $P$ of finite width $k$ is decomposable into $k$ Borel chains.
\end{conjecture}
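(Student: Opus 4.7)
The plan is to proceed by induction on the width $k$. The base case $k=1$ is immediate, since a Borel poset of width one is itself a Borel chain. For the inductive step, given a Borel poset $P=(V,\preceq)$ of width $k$, I would try to extract a single Borel chain $C\subseteq V$ with the property that the Borel subposet on $V\setminus C$ has width at most $k-1$. The inductive hypothesis then decomposes $V\setminus C$ into $k-1$ Borel chains, and adjoining $C$ gives the required decomposition of $P$ into $k$ Borel chains.

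The construction of such a $C$ is the crux. In the finite case, any chain which meets every maximum antichain has the desired property, and the existence of such chains follows from a standard strengthening of Dilworth's theorem. To imitate this Borel-measurably, I would attempt to build $C$ as the union of an increasing (possibly transfinite) sequence of Borel chains, where at each stage the current chain is enlarged so as to intersect new maximum antichains. At each step one needs a Borel selector for the relation ``$x$ belongs to some maximum antichain not yet met by the current chain''. The width hypothesis is genuinely useful here: because antichains have size at most $k$, many of the relevant fibres are finite, which puts Luzin--Novikov-type uniformization theorems at one's disposal. A natural auxiliary tool is a Feldman--Moore-style decomposition of the comparability relation into countably many Borel partial functions, which allows one to parametrise candidate chains in a Borel manner.

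The principal obstacle is fundamental. Borel chromatic numbers routinely exceed ordinary chromatic numbers, and the conjecture in its full strength is equivalent to the assertion that the Borel chromatic number of the incomparability graph of a Borel poset equals its clique number, namely the width $k$. If a $G_0$-type graph of Kechris--Solecki--Todorcevic could be realised as the incomparability graph of a Borel poset, the conjecture would fail outright. A proof therefore has to exhibit some structural feature of Borel posets---perhaps a Borel analogue of the perfection of comparability graphs, or a Borel version of the max-flow/min-cut duality underlying one classical proof of Dilworth's theorem---that excludes any such obstruction. The two cases proved in this paper succeed because their hypotheses (embeddability in $\mathbb{R}$ and local countability of the comparability relation) each forbid $G_0$-style graphs from appearing as incomparability graphs, and a general argument will likely have to combine ideas from both together with a sharper descriptive-set-theoretic analysis of the space of maximum antichains.
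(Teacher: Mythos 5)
You have not given a proof, and indeed the statement you are addressing is Conjecture 1 of the paper, which the paper itself leaves open: the authors prove it only in special cases (Theorem \ref{Dilworth} for realistic Borel posets on $\mathbb{R}$, via Theorem \ref{max_chain} that maximal chains are Borel, and Theorem \ref{prop3} under local countability and smoothness of $E_{G_P}$), and they even admit they do not know any finite bound on the number of Borel chains in general. So there is no proof in the paper to compare yours against, and your proposal must stand on its own; it does not.

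The concrete gap is the inductive step. The skeleton is sound in the abstract setting (any chain meeting every $k$-element antichain leaves a poset of width at most $k-1$, and by compactness such chains exist), but the entire content of the conjecture is making such a chain Borel, and your sketch of that construction does not work as described. First, the relation ``$x$ lies in some maximum antichain disjoint from the current chain $C$'' is a projection of a Borel subset of $V^k$, hence a priori only analytic, not Borel; and Luzin--Novikov does not rescue it, because the relevant fibre over a point $x$ is not an antichain (of size $\le k$) but the family of all maximum antichains through $x$ avoiding $C$, which can be uncountable, even perfect. So the ``Borel selector'' you invoke at each stage is not available by the cited tools. Second, even granting Borel stages, your chain $C$ is built as an increasing transfinite union of Borel sets; a union of length $\omega_1$ need not be Borel, and you give no argument that the process stabilizes at a countable stage. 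Third, you yourself concede in the final paragraph that a ``fundamental'' obstruction (a $G_0$-type incomparability graph) must be excluded by some structural property of Borel posets, and you offer no such property or exclusion argument; the two hypotheses that make the paper's partial results work (a realistic embedding into $\mathbb{R}$, which lets the authors show maximal chains are Borel via Lemmas \ref{lem1} and \ref{lem2} and then take set differences of maximal extensions of a compactness decomposition, and local countability, which enables Luzin--Novikov) are exactly the assumptions your general argument would have to do without. As it stands, the proposal is a research programme with its central step unproved, not a proof of the conjecture.
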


Notice that by the Compactness Principle, every infinite poset of finite width $k$ can be split into $k$ chains, if no restrictions on set-theoretic properties of these chains are imposed. Actually we do not know at present if there is any finite bound (depending on $k$) on the number of chains in Conjecture 1. However, \emph{thin} Borel posets (no antichain is a perfect set) can be decomposed into a countable number of Borel chains, as proved by Harrington, Merker, and Shelah in \cite{HarringtonMS}.

Our main result confirms Conjecture 1 for a special type of Borel posets that can be embedded into the real line. We call a (not necessarily Borel) poset $P=(V,\preceq)$, where $V\subseteq \mathbb{R}$, {\it realistic} if $x\preceq y$ implies $x\leqslant y$, for every $x,y\in V$. In Section 2 we prove that Conjecture 1 is indeed true for realistic Borel posets $P=(\mathbb{R},\preceq)$  (Theorem \ref{Dilworth}). We derive this fact easily from another result asserting that maximal chains in such posets are Borel (Theorem \ref{max_chain}).

One may have a feeling that realistic Borel poset is a rather restrictive notion. However, as proved by Hladk\'{y}, M\'{a}th\'{e}, Patel, and Pikhurko \cite{HladkyMPP}, every \emph{measurable} poset $P$ (considered as an ordered probability space) can be approximated (in a measurable way) by some realistic poset on the unit interval (with the Lebesgue measure). One may therefore expect that by using tools from \cite{HladkyMPP} at least a measurable version of Conjecture 1 is achievable.

\section{Dilworth's theorem for realistic posets}

The following result is the key element in proving Dilworth's theorem for realistic Borel posets.

\begin{theorem}\label{max_chain} Every maximal chain in a realistic Borel poset $P=(\mathbb{R},\preceq)$ of finite width is Borel.
\end{theorem}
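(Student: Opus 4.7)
My plan is to proceed by induction on the width $k$. For $k=1$, realism together with width $1$ forces $\preceq=\leq$ on $\mathbb{R}$, so the unique maximal chain is $\mathbb{R}$ itself, which is Borel.

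For the inductive step, the key structural observation is that, for every $z\in\mathbb{R}$, the set
\[
N(z):=\{y\in\mathbb{R}:y\neq z\text{ and $y$ is incomparable with $z$ in $\preceq$}\}
\]
is Borel, and the induced subposet $(N(z),\preceq)$ is realistic of width at most $k-1$, since any antichain in $N(z)$ together with $z$ is an antichain of $P$. By the inductive hypothesis, every maximal chain of $N(z)$, viewed inside the standard Borel space $N(z)\subseteq\mathbb{R}$, is Borel.

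Given a maximal chain $C$ of $P$, maximality yields the tautology $z\in C\iff C\cap N(z)=\emptyset$, so the task reduces to deciding the emptiness of $C\cap N(z)$ Borel-wise in $z$. I would try to associate to each $z$ a canonical Borel subset $\widehat{C}(z)\subseteq N(z)$ that agrees with $C\cap N(z)$ whenever $z\notin C$, so that $z\in C$ becomes equivalent to $\widehat{C}(z)=\emptyset$. The splitting $N(z)=N^-(z)\cup N^+(z)$ along the ambient order of $\mathbb{R}$, which is available precisely because $P$ is realistic, should provide enough rigidity to extract such a candidate from the inductive hypothesis by identifying a distinguished maximal chain of $N(z)$ uniformly in $z$.

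The hard part will be that $C\cap N(z)$ need not itself be a maximal chain of $N(z)$: an element $w\in N(z)$ comparable with every element of $C\cap N(z)$ can still fail to be comparable with some $c\in C\setminus N(z)$ (i.e.\ a $c$ comparable with $z$), making $\{w\}\cup(C\cap N(z))$ a strictly larger chain of $N(z)$ even though $w\notin C$; already in width $3$ small examples exhibit this. Circumventing the obstacle will likely require either an iterative refinement inside $N(z)$ that absorbs, at each stage, the constraints coming from $C$-elements comparable with $z$, or combining the induction with the Harrington--Merker--Shelah countable Borel chain decomposition (applicable since finite width implies thinness) to reduce the problem to analysing $C\cap B_i$ within each Borel chain $B_i$ of the decomposition.
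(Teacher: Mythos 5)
Your base case and the preliminary observations you actually establish are fine: $N(z)=I_P(z)$ is Borel, the induced poset on $N(z)$ is realistic of width at most $k-1$, and for a maximal chain $C$ one has $z\in C$ iff $C\cap N(z)=\emptyset$. But the proposal stops exactly where the proof would have to start, and the gap you yourself flag is genuine. The inductive hypothesis only says that each maximal chain of each $N(z)$ is Borel \emph{as a set}; it provides no uniform-in-$z$ selection and no way to decide Borel-measurably in $z$ whether the particular set $C\cap N(z)$ is empty. The proposed remedy --- a canonical Borel set $\widehat{C}(z)\subseteq N(z)$ agreeing with $C\cap N(z)$ whenever $z\notin C$ --- is never constructed, and any such construction must be extracted from $C$ itself, which is circular unless $C$ admits some countable Borel description; producing that countable description is precisely the missing content. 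On top of this sits the obstruction you name (that $C\cap N(z)$ need not be a maximal chain of $N(z)$), and neither escape route you suggest is carried out; note in particular that the decomposition of thin Borel posets into countably many Borel chains from \cite{HarringtonMS} does not obviously help, since an arbitrary subset of a chain is again a chain, so knowing $C\subseteq\bigcup_i B_i$ with $B_i$ Borel chains says nothing about the Borelness of the pieces $C\cap B_i$.

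The idea your plan is missing is a countable \emph{basis} for the incomparability of a chain, and with it the induction on width becomes unnecessary. In a realistic poset of finite width, for every chain $Y$ there are countably many elements $b_1,b_2,\ldots\in Y$ with $I_P(Y)=\bigcup_{i=1}^{\infty} I_P(b_i)$. Granting this, a maximal chain satisfies $Y=\mathbb{R}-I_P(Y)=\mathbb{R}-\bigcup_i I_P(b_i)$, and each $I_P(b_i)$ is Borel because it is the complement of the union of two sections of the Borel order relation; this already finishes the proof, with no uniformization in $z$ needed. The countable basis comes from realism and finite width: the nontrivial components of the incomparability graph $G_P$ have vertex sets that are traces of pairwise disjoint real intervals, hence there are only countably many of them; by finite width (and compactness) $I_P(Y)$ splits into finitely many chains, and inside each component the elements of such a chain are covered, using countably many finite alternating paths of $G_P$, by sets $I_P(b)$ with $b\in Y$ taken from those paths --- here realism is used again to see that an element lying (in the real order) between two consecutive chain-vertices of a path must be incomparable with the $Y$-vertex joining them. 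Supplying this step is what your sketch would need to become a proof.
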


Our main result is an easy consequence of this theorem.

\begin{theorem}\label{Dilworth} Every realistic Borel poset $P=(\mathbb{R},\preceq)$ of finite width $k$ has a partition into $k$ Borel chains.
\end{theorem}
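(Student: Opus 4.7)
The plan is to use Theorem \ref{max_chain} as a black box and reduce Theorem \ref{Dilworth} to it in three steps: (i) produce a (non-Borel) cover of $\mathbb{R}$ by $k$ chains using the set-theoretic version of Dilworth's theorem, (ii) enlarge each of these chains to a maximal chain, which by Theorem \ref{max_chain} is automatically Borel, and (iii) turn the resulting Borel cover into a Borel partition by a standard disjointing trick.

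For step (i), I invoke the Compactness Principle observation made in the introduction: every infinite poset of finite width $k$ splits into $k$ chains $D_{1},\dots,D_{k}$ provided no regularity is required. For step (ii), I use Zorn's lemma to extend each $D_{i}$ to a maximal chain $M_{i}$ in $P$. Since $D_{i}\subseteq M_{i}$, the cover is preserved: $\mathbb{R}=M_{1}\cup\cdots\cup M_{k}$. Theorem \ref{max_chain} then guarantees that every $M_{i}$ is a Borel subset of $\mathbb{R}$.

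For step (iii), define
\[
C_{1}:=M_{1},\qquad C_{i}:=M_{i}\setminus\bigl(M_{1}\cup\cdots\cup M_{i-1}\bigr)\quad\text{for }2\le i\le k.
\]
Each $C_{i}$ is Borel as a finite Boolean combination of Borel sets; each $C_{i}$ is a chain as a subset of the chain $M_{i}$; and the $C_{i}$ are pairwise disjoint with $C_{1}\cup\cdots\cup C_{k}=\mathbb{R}$. Allowing some $C_{i}$ to be empty if the $M_{i}$ overlap, this is the required partition of $P$ into $k$ Borel chains. The lower bound $k$ on the number of chains is of course forced by the existence of an antichain of size $k$.

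There is essentially no obstacle once Theorem \ref{max_chain} is granted; the substantive content of the paper is entirely in that theorem. The only place where care is nominally needed is step (i), which silently invokes the axiom of choice and therefore gives no effective cover, but this is harmless since we only claim existence of the Borel partition, not an explicit construction of it.
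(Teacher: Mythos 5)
Your proposal is correct and follows essentially the same route as the paper: a Compactness Principle cover by $k$ chains, extension to maximal chains which are Borel by Theorem \ref{max_chain}, and the disjointing $C_i = M_i \setminus (M_1\cup\cdots\cup M_{i-1})$. No differences worth noting.
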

\begin{proof}
	It follows easily from the Compactness Principle that $P$ has a partition into $k$ (not necessarily Borel) chains. We extend these chains to maximal ones, say $C_1,C_2,\ldots,C_k$. The chains are Borel by Theorem \ref{max_chain}. We define $$C_i'=C_i-(C_1\cup C_2\cup\cdots\cup C_{i-1})$$ for $i=1,2,\ldots,k$. Clearly, the chains $C_1',C_2',\ldots,C_k'$ are Borel and form a partition of $P$.
\end{proof}

Before presenting the proof of Theorem \ref{max_chain}, we shall introduce some notation and prove several lemmas.

For a poset $P=(V,\preceq)$ and $x\in V$ we denote by $I_P(x)$ the set of elements of $P$ incomparable with $x$.  For any set $X\subseteq V$, let $I_P(X)$ be the set of elements of $P$ incomparable with some element of $X$, that is,
$$I_P(X)=\bigcup_{x\in X}I_P(x).$$
By $G_P$ we denote the \emph{incomparability graph} of $P$, that is, the graph whose vertices are the elements of $P$ with edges joining pairs of incomparable elements of $P$. For any set $X\subseteq\mathbb{R}$ by $\inf X$ (resp. $\sup X$) we always mean the infimum (resp. supremum) of $X$ in $\mathbb{R}$. Moreover, for $a,b\in \mathbb{R}$, $a<b$, let $[a,b]$ denote the closed interval in $\mathbb{R}$.

Let $P=(V,\preceq)$ be a realistic poset. For a component $C$ of the incomparability graph $G_P$ we define $V_C$ to be the vertex set of $C$. We denote by $I_C$ an interval in $\mathbb{R}$ with the ends $\underline{C}=\inf V_C$ and $\overline{C}=\sup V_C$ such that $\underline{C}\in I_C$ (resp. $\overline{C}\in I_C$) if and only if $\underline{C}\in V_C$  (resp. $\overline{C}\in V_C$).

\begin{lemma}\label{lem0}
	Let $P=(V,\preceq)$ be a realistic poset with incomparability graph $G_P$.
	\begin{enumerate}[(i)]
		\item If $C$ is a component of $G_P$, then $V_C=V\cap I_C$.
		\item The intervals $I_C$ for different components $C$ of $G_P$ are disjoint.
		\item The number of nontrivial (non-singleton) components of $G_P$ is countable.
	\end{enumerate}
\end{lemma}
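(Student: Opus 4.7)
The plan is to establish (i) first and then deduce (ii) and (iii) from it.

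For (i), the inclusion $V_C\subseteq V\cap I_C$ is immediate from the definitions of $\underline{C},\overline{C}$ and the endpoint convention built into $I_C$. The reverse inclusion is the heart of the argument: assuming for contradiction that $x\in V\cap I_C$ but $x\notin V_C$, I would note that $x$ then lies in a different component of $G_P$, hence is nonadjacent to every $y\in V_C$, i.e.\ comparable to every such $y$. Partition $V_C$ into
\[A=\{y\in V_C:y\prec x\}\qquad\text{and}\qquad B=\{y\in V_C:x\prec y\},\]
which really is a partition because $x\notin V_C$. Realisticity puts $A$ weakly to the left of $x$ in $\mathbb{R}$ and $B$ weakly to the right. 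If $A=\emptyset$, then $\underline{C}\ge x$; combined with $x\ge\underline{C}$ from $x\in I_C$ this forces $x=\underline{C}$, whereupon the endpoint convention yields $\underline{C}=x\in V_C$, contradicting $x\notin V_C$ (the case $B=\emptyset$ is symmetric). Otherwise both $A$ and $B$ are nonempty; since the component $C$ induces a connected subgraph of $G_P$, some edge $uv\in G_P$ crosses the partition with $u\in A$, $v\in B$, and transitivity turns $u\prec x\prec v$ into $u\prec v$, contradicting the incomparability of $u$ and $v$.

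For (ii), I would use (i) to convert the problem into one about vertex sets: $V\cap I_C\cap I_{C'}=V_C\cap V_{C'}$, which is empty whenever $C\ne C'$. Hence every $y\in V_{C'}$ lies outside $I_C$ and therefore in $(-\infty,\underline{C}]\cup[\overline{C},\infty)$. If $V_{C'}$ met both halves, then $\underline{C'}\le\underline{C}$ and $\overline{C'}\ge\overline{C}$; combined with the endpoint convention this quickly forces some element of $V_C$ into $V\cap I_{C'}=V_{C'}$, contradicting vertex-disjointness. So $V_{C'}$ sits entirely on one side, say $\overline{C'}\le\underline{C}$; a strict inequality makes the intervals clearly disjoint, while equality $\overline{C'}=\underline{C}$ puts the common point in at most one of the disjoint sets $V_C,V_{C'}$ and hence, by the endpoint convention, in at most one of $I_C,I_{C'}$, again giving $I_C\cap I_{C'}=\emptyset$.

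Part (iii) is then immediate from (ii): the intervals $I_C$ for nontrivial components are pairwise disjoint and each has $\underline{C}<\overline{C}$, so each of their interiors contains a rational; choosing one such rational per component embeds the set of nontrivial components injectively into $\mathbb{Q}$.

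The delicate point throughout is the endpoint bookkeeping: the convention defining $I_C$ is tailored precisely so that an element sitting at $\underline{C}$ or $\overline{C}$ for one component cannot slip into the interval of another, and the boundary subcases of (i) and (ii) have to be handled with this convention in mind. Beyond that, the argument uses only transitivity of $\preceq$, connectedness of components, and the realistic axiom $x\preceq y\Rightarrow x\le y$.
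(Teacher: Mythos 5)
Your proof is correct and takes essentially the same route as the paper: part (i) rests on connectivity of the component plus the realistic axiom, and (ii) and (iii) are then deduced from (i) via disjointness of the vertex sets and a choice of one rational per nontrivial component, with the endpoint conventions handled soundly. The only difference is cosmetic: in (i) you take a crossing edge of the cut $A=\{y\in V_C:\,y\prec x\}$, $B=\{y\in V_C:\,x\prec y\}$, whereas the paper walks along a path in $C$ from some $a<x$ to some $b>x$ and locates consecutive path vertices $x_i\leqslant x<x_{i+1}$, both yielding the same incomparable-but-comparable contradiction.
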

\begin{proof}
	To prove (i) we observe that for any $x\in V_C$, we have $\underline{C}\leqslant x\leqslant \overline{C}$. So, by the definition of $I_C$, we get $x\in V\cap I_C$. Hence $V_C\subseteq V\cap I_C$.
	
	Suppose now that $x\in V\cap I_C$. By the definition of $I_C$, if $x=\underline{C}$ or $x=\overline{C}$, then $x\in V_C$. Otherwise, there are $a,b\in V_C$ such that $a< x<b$. Let $a=x_1,x_2,\dots,x_k=b$ be a path in $C$. Consider the largest $i$ such that $x_i\leqslant x$. Clearly, $i$ is well-defined and $i<k$. Then, $x_i\leqslant x<x_{i+1}$. Suppose $x$ is comparable in $P$ to both $x_i$ and $x_{i+1}$. Since $x_i\leqslant x<x_{i+1}$, we have $x_i\preceq x\prec x_{i+1}$ because the poset $P$ is realistic. We have got a contradiction because $x_i$ and $x_{i+1}$ are not comparable in $P$ as $x_ix_{i+1}$ is an edge in the graph $G_P$. Thus, either $x$ and $x_i$ or $x$ and $x_{i+1}$ are not comparable in $P$. In both cases $x\in V_C$.
	
	To show (ii) consider two different components $C$ and $C'$ of the graph $G_P$ and suppose that there is $x\in I_C\cap I_{C'}$. We can assume without loss of generality that $\overline{C}\leqslant \overline{C'}$. Then, there is $y\in V_C\subseteq V$ such that $x\leqslant y\leqslant\overline{C}$. Clearly, $y\in I_C\cap I_{C'}$. So, by (i) we get $y\in V_C\cap V_{C'}$, a contradiction.
	
	If a component $C$ is nontrivial, then the interval $I_C$ has positive length. Thus, it contains a rational point. Since by (ii) the intervals $I_C$ are disjoint and the number of rational points on the real line is countable, so is the number of nontrivial components of the graph $G_P$, which proves (iii).
\end{proof}

\begin{lemma}\label{lem1}
	Let $P=(V,\preceq)$ be a realistic poset which has a partition into two disjoint chains $X$ and $Y$. If no element of $X$ is comparable to all elements of $P$, then there are countably many elements $b_1, b_2,\ldots \in Y$ such that
	\[
	X=\bigcup_{i=1}^\infty I_P(b_i).
	\]
\end{lemma}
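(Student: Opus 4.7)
The plan is to reformulate the desired cover of $X$ as a piercing problem. For each $x \in X$, set $N_Y(x) := \{b \in Y : b \text{ is incomparable with } x\}$. Since $X$ is a chain, anything incomparable with an $x \in X$ must lie in $Y$, so the hypothesis yields $N_Y(x) \ne \emptyset$ for every $x \in X$. Moreover, since $Y$ is a chain, each $b \in Y$ is comparable with every other element of $Y$, so $I_P(b) \subseteq X$ for all $b \in Y$. Hence to produce countably many $b_i \in Y$ with $X = \bigcup_{i=1}^\infty I_P(b_i)$ it suffices to exhibit a countable $B \subseteq Y$ meeting every $N_Y(x)$.

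The key structural observation I would prove first is that $N_Y(x)$ is order-convex in $Y$. Given $b_1 \prec b \prec b_2$ in $Y$ with $b_1, b_2 \in N_Y(x)$: if $b \prec x$, then $b_1 \preceq b$ combined with transitivity yields $b_1 \prec x$, contradicting $b_1 \in N_Y(x)$; if $x \prec b$, then $b \preceq b_2$ analogously gives $x \prec b_2$, contradicting $b_2 \in N_Y(x)$. Hence $b$ is incomparable with $x$.

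To construct $B$, I would take $D \subseteq Y$ to be any countable subset dense in $Y$ in the subspace topology inherited from $\mathbb{R}$, and let $B_{\mathrm{gap}} \subseteq Y$ consist of those $y \in Y$ that arise as an endpoint of some connected component of the open set $\mathbb{R} \setminus \overline{Y}$, together with $\min Y$ and $\max Y$ if they are attained. Since $\mathbb{R} \setminus \overline{Y}$ is a countable disjoint union of open intervals, $B_{\mathrm{gap}}$ is countable, so $B := D \cup B_{\mathrm{gap}}$ is the desired countable candidate.

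Finally I would verify that $B$ meets $N_Y(x)$ for each $x \in X$. If $N_Y(x)$ has nonempty interior as a subspace of $Y$, it contains a nonempty relatively open subset of $Y$, which meets $D$. Otherwise, order-convexity together with empty interior forces $Y \cap (\inf N_Y(x), \sup N_Y(x)) = \emptyset$; the (at most two) endpoints $\inf N_Y(x)$ and $\sup N_Y(x)$ then bound a nondegenerate component of $\mathbb{R} \setminus \overline{Y}$ and thus lie in $B_{\mathrm{gap}}$. I expect the main subtlety to lie in the singleton case $N_Y(x) = \{y_0\}$: a short comparability argument (using realistic and the chain structure of $Y$) shows that no $y \in Y$ can sit in the open interval between $x$ and $y_0$, since any such $y$ would be forced to be incomparable with $x$ and thus contradict $N_Y(x) = \{y_0\}$; this identifies $y_0$ as the endpoint of a nondegenerate component of $\mathbb{R} \setminus \overline{Y}$ on the side opposite $x$, placing $y_0 \in B_{\mathrm{gap}}$.
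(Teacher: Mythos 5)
Your proof is correct, and it takes a genuinely different route from the paper's. The paper argues through the components of the incomparability graph $G_P$: via Lemma~\ref{lem0} it reduces to countably many nontrivial components, and inside each component it covers $A=X\cap V_C$ by the sets $I_P(b_j^i)$ attached to the $Y$-vertices of paths joining sequences $z_i\to\inf A$ and $x_i\to\sup A$, with realism used to propagate comparability along consecutive path edges. You instead work fiberwise: for each $x\in X$ the set $N_Y(x)=I_P(x)\cap Y$ is nonempty (by the hypothesis, since anything incomparable with $x$ lies in $Y$) and order-convex in the chain $Y$, on which $\preceq$ coincides with $\leqslant$ by realism; you then pierce all these convex fibers by a single countable set, namely a countable dense subset of $Y$ (which meets every fiber with nonempty relative interior) together with the countably many $Y$-endpoints of the gap components of $\mathbb{R}\setminus\overline{Y}$ (which catch the fibers with empty interior: convexity plus empty interior forces such a fiber into the at most two points $\inf N_Y(x)$, $\sup N_Y(x)$, and your singleton argument --- no point of $Y$ lies strictly between $x$ and its unique incomparable element $y_0$, since a comparable such point would force $x$ and $y_0$ to be comparable --- correctly places $y_0$ at the end of a gap). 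I checked the delicate points (an open interval disjoint from $Y$ is disjoint from $\overline{Y}$; the two-point and singleton cases of empty interior) and they all hold. What the two approaches buy: yours dispenses with Lemma~\ref{lem0} and the path bookkeeping, relying only on separability of $\mathbb{R}$ and the interval structure of open subsets of the line, and it yields a small strengthening --- the countable piercing set depends only on $Y$ as a subset of $\mathbb{R}$, so in the proof of Lemma~\ref{lem2} one fixed countable subset of $Y$ would serve all the chains $X_1,\ldots,X_r$ simultaneously; the paper's component-based argument is less topological and reuses the interval description of components, but for this particular lemma it is somewhat longer than your route.
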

\begin{proof}
	Since no element of $X$ is comparable to all elements of $P$, all components of the incomparability graph $G_P$ intersecting $X$ are nontrivial (i.e. non-singleton). Consider any such component $C$. Let $V_C$ be the set of vertices of $C$.  Denote $A=X\cap V_C$ and $B=Y\cap V_C$.
	
	We claim that $A$ is a union of countably many sets of the form $I_P(b)$, where $b\in B$.
	
	If $|A|=1$, then $A=\{ a\}\subseteq I_P(b)$, for some $a\in A$ and $b\in B$. Assume now that $|A|>1$. Let $\overline{a}=\sup A$ and $\underline{a}=\inf A$. If $\overline{a}\not\in A$ (resp. $\underline{a}\not\in A$), then define $x_1,x_2,\ldots$ (resp.  $z_1,z_2,\ldots$)  to be an increasing (resp. decreasing) sequence of elements of $A$ convergent to $\overline{a}$ (resp. to $\underline{a}$). If $\overline{a}\in A$ (resp. $\underline{a}\in A$), then $x_1=x_2=\ldots=\overline{a}$ (resp.  $z_1=z_2=\ldots=\underline{a}$). We can assume without loss of generality that $x_1>z_1$. We observe that 
\begin{equation}\label{eq2}
A\subseteq\bigcup_{i=1}^\infty[z_i,x_i].
\end{equation}
For every $i=1,2,\ldots$, consider a path $$z_i=a_1^i,b_1^i,a_2^i,b_2^i,\ldots,a_{t}^i,b_{t}^i,a_{t+1}^i=x_i$$ joining $z_i$ and $x_i$ in $C$. Clearly, $a_1^i,\ldots,a_{t+1}^i\in A$ and $b_1^i,\ldots,b_{t}^i\in B$.
	Let $I_j^i$ be the closed interval with the ends $a_j^i$ and $a_{j+1}^i$, that is, $I_j^i=[a_j^i,a_{j+1}^i]$ if $a_j^i< a_{j+1}^i$, or $I_j^i=[a_{j+1}^i,a_j^i]$ otherwise.
	
	We shall show that 
\begin{equation}\label{eq3}
I_j^i\cap A\subseteq I_P(b_j^i),
\end{equation}
 for $j=1,2,\ldots,t$.  Let us assume that $I_j^i=[a_j^i,a_{j+1}^i]$ (the case $I_j^i=[a_j^i,a_{j+1}^i]$ is analogous). Suppose that for some $c\in [a_j^i,a_{j+1}^i]\cap A$ we have $c\preceq b_j^i$ (respectively, $b_j^i\preceq c$). Then $a_j^i\preceq c\preceq b_j^i$ (resp. $b_j^i\preceq c\preceq a_{j+1}^i$), because $a_j^i,a_{j+1}^i,c$ are pairwise comparable in $P$ as they are elements of the chain $X$ and $P$ is a realistic poset. We have got a contradiction because $a_j^ib_j^i$ (resp. $b_j^ia_{j+1}^i$) is an edge in $G_P$. Thus, indeed, (\ref{eq3}) holds.
 
 Moreover, $$[z_i,x_i]\subseteq\bigcup_{j=1}^tI_j^i,$$ thus, by (\ref{eq3}), 
\[
	[z_i,x_i]\cap A\subseteq\bigcup_{j=1}^tI_j^i\cap A\subseteq \bigcup_{j=1}^tI_P(b_j^i)\subseteq A.
\]
	This inclusion and (\ref{eq2}) imply
	\begin{equation}\label{eq1}
	A=\bigcup_{i=1}^\infty [z_i,x_i]\cap A=\bigcup_{i=1}^\infty\bigcup_{j=1}^tI_P(b_j^i),
	\end{equation}
which completes the proof of the claim.
	
	By Lemma \ref{lem0}(iii), there are countably many nontrivial components in $G_P$. As we have already observed, all components of $G_P$ intersecting $X$ are nontrivial. Denote vertex sets of these components by  $V_{C_1},V_{C_2},\ldots$ and let $A_i=X\cap V_{C_i}$ for every $i=1,2,\ldots$. Since $X=\bigcup_{i=1}^\infty A_i$, the lemma follows by the claim.
\end{proof}

\begin{lemma}\label{lem2}
	Let $P=(V,\preceq)$ be a realistic poset of a finite width. For any chain $Y$ in $P$ there are countably many elements $b_1, b_2,\ldots\in Y$ such that
	\[
	I_P(Y)=\bigcup_{i=1}^\infty I_P(b_i).
	\]
\end{lemma}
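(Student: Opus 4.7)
The plan is to reduce to Lemma~\ref{lem1} by slicing $I_P(Y)$ into finitely many chains. Since $P$ has width $k$, the sub-poset induced on $I_P(Y)$ also has width at most $k$. By the Compactness Principle (already invoked in the paper), we may decompose
\[
I_P(Y)=X_1\sqcup X_2\sqcup\cdots\sqcup X_m
\]
into $m\leqslant k$ chains.

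For each $j\in\{1,2,\ldots,m\}$ I would then consider the sub-poset $P_j$ of $P$ induced on $V_j:=X_j\cup Y$. Since $X_j$ and $Y$ are chains, $P_j$ is a realistic poset partitioned into two chains, so Lemma~\ref{lem1} is eligible to be applied to $P_j$ with $X:=X_j$, provided I can check its hypothesis. This is the one thing to verify: every $x\in X_j\subseteq I_P(Y)$ is, by definition, incomparable in $P$ to some $y\in Y$, and that $y$ still belongs to $V_j$, so $x$ is incomparable in $P_j$ to $y$. Hence no element of $X_j$ is comparable (in $P_j$) to every element of $P_j$, and Lemma~\ref{lem1} produces countably many $b_1^{(j)},b_2^{(j)},\ldots\in Y$ with $X_j=\bigcup_{i=1}^\infty I_{P_j}(b_i^{(j)})$.

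To finish, I would translate the incomparabilities from $P_j$ back to $P$. Because $b_i^{(j)}\in Y$ and $Y$ is a chain, $I_{P_j}(b_i^{(j)})=I_P(b_i^{(j)})\cap V_j=I_P(b_i^{(j)})\cap X_j$. Gathering the countably many elements $\{b_i^{(j)}:i\geqslant 1,\ 1\leqslant j\leqslant m\}$ into a single sequence $b_1,b_2,\ldots\in Y$, one inclusion
\[
\bigcup_{i}I_P(b_i)\subseteq I_P(Y)
\]
is immediate from $b_i\in Y$, while the reverse inclusion follows by writing any $x\in I_P(Y)$ as an element of some $X_j$ and using the decomposition of $X_j$ obtained above.

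The genuinely delicate step is the second paragraph: one must be careful that the decomposition of $I_P(Y)$ into chains, together with the choice of sub-poset $P_j$, preserves the incomparabilities needed to feed Lemma~\ref{lem1}. The finite-width hypothesis is essential precisely there, since it is what keeps the outer index $j$ over a finite set and thus keeps the combined family of $b_i^{(j)}$ countable; without a finite bound on $m$ the union could blow up past countable even after Lemma~\ref{lem1} controls each slice.
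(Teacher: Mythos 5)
Your proposal is correct and follows essentially the same route as the paper's own proof: decompose $I_P(Y)$ into finitely many chains via the Compactness Principle, apply Lemma~\ref{lem1} to each induced sub-poset $X_j\cup Y$ (whose hypothesis holds since $X_j\subseteq I_P(Y)$), and collect the countably many witnesses $b_i^{(j)}\in Y$. Your extra remarks on transferring incomparability from $P_j$ back to $P$ and on why finite width keeps the family countable are accurate refinements of the same argument.
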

\begin{proof}
	By the Compactness Principle the set $I_P(Y)$ is a union of chains, say $X_1,\ldots,X_r$, for some finite $r$.
	
	Let $P_i$ be the ordered set induced in $P$ by the set $X_i\cup Y$, for $i=1,\ldots,r$. Since $X_i\subseteq I_P(Y)$, no element of $X_i$ is comparable to all elements of $P_i$.
	Applying Lemma \ref{lem1} to each ordered set $P_i$ gives
	\[
	X_i=\bigcup_{j=1}^\infty I_{P_i}(b_j^i)\subseteq \bigcup_{j=1}^\infty I_P(b_j^i)\subseteq I_P(Y),
	\]
	for some $b_i^j\in Y$, which completes the proof because $I_P(Y)=X_1\cup\cdots\cup X_r$.
\end{proof}

\begin{proof1}{\bf of Theorem \ref{max_chain}.}
	We observe that for any $x\in \mathbb{R}$, the set $U_P(x)=\{y\in \mathbb{R}: x\preceq y\}$ is a vertical section of the Borel set $E=\{(x,y)\in\mathbb{R}\times \mathbb{R}: x\preceq y\}$, so it is Borel. Similarly, the set $D_P(x)=\{y\in \mathbb{R}: y\preceq x\}$ is a horizontal section of $E=\{(y,x)\in\mathbb{R}\times \mathbb{R}: y\preceq x\}$, so it is Borel, too. Since $$I_P(x)=\mathbb{R}-(U_P(x)\cup D_P(x)),$$ it follows that the set $I_P(x)$ is also Borel.
	
	Let $Y$ be a maximal chain in $P$. Then $Y=\mathbb{R}-I_P(Y)$. Hence $Y$ is a Borel set by the observation in the preceding paragraph and Lemma \ref{lem2}.
\end{proof1}
\medskip

Let us conclude this section with a general observation on realistic posets which follows from Lemma \ref{lem2} and is perhaps interesting by itself.
\begin{theorem}
	Let $P=(V,\preceq)$ be a realistic poset of a finite width. Then every maximal chain $Y$ in $P$ contains a countable chain $X$ such that $Y$ is the only extension of $X$ to a maximal chain in $P$.  
\end{theorem}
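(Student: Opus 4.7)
The plan is to feed the maximal chain $Y$ directly into Lemma \ref{lem2} and take for $X$ the countable set $\{b_1,b_2,\ldots\}\subseteq Y$ of witnesses it produces. Being a subset of the chain $Y$, the set $X$ is automatically a countable chain, so all that has to be checked is the uniqueness of the extension to a maximal chain.

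The first step I would carry out is to record the characterization of a maximal chain in any poset, namely the identity $Y=V\setminus I_P(Y)$. The inclusion $Y\subseteq V\setminus I_P(Y)$ holds because $Y$ is a chain (and $\preceq$ is reflexive), while the reverse inclusion $V\setminus I_P(Y)\subseteq Y$ follows from maximality: any $v\in V$ comparable with every element of $Y$ could otherwise be adjoined to form the strictly larger chain $Y\cup\{v\}$. Combining this identity with Lemma \ref{lem2} yields the working presentation $Y=V\setminus\bigcup_{i=1}^\infty I_P(b_i)$.

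The second step is to deduce uniqueness from this presentation. If $Y'$ is any chain with $X\subseteq Y'$, then every $y'\in Y'$ is comparable with each $b_i$, so $y'\notin I_P(b_i)$ for all $i$, hence $y'\notin I_P(Y)$, hence $y'\in Y$. Therefore $Y'\subseteq Y$, and if $Y'$ is itself a maximal chain the inclusion forces $Y'=Y$, as required.

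I do not foresee a genuine obstacle here: the essential covering information is already encoded in Lemma \ref{lem2}, and the remainder of the argument amounts to unpacking the maximality identity $Y=V\setminus I_P(Y)$ against that covering.
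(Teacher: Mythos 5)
Your proposal is correct and follows essentially the same route as the paper: apply Lemma \ref{lem2} to the maximal chain $Y$, take $X=\{b_1,b_2,\ldots\}$, and use the identity $Y=V\setminus I_P(Y)$ to conclude that any chain containing $X$ avoids $\bigcup_i I_P(b_i)=I_P(Y)$ and hence lies inside $Y$, so maximality forces equality. The paper phrases this last step as saying every extension of $X$ to a maximal chain is contained in $\bigcap_i(U_P(b_i)\cup D_P(b_i))=Y$, which is the same argument.
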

\begin{proof}
	Let $X=\{b_1,b_2,\ldots\}$, where $b_1,b_2,\ldots$ are the elements of $Y$ whose existence is guaranteed  by Lemma \ref{lem2}.  Since the chain $Y$ is maximal, $Y=V-I_P(Y)$. Thus,  by Lemma \ref{lem2} we get
	$$
	Y=V-\bigcup_{i=1}^\infty I_P(b_i)=\bigcap_{i=1}^\infty(V-I_P(b_i))=\bigcap_{i=1}^\infty(U_P(b_i)\cup D_P(b_i)).
	$$
	Clearly, every extension of the chain $X$ to a maximal chain is a subset of $$\bigcap_{i=1}^\infty(U_P(b_i)\cup D_P(b_i)),$$
	which completes the proof of the theorem. 
\end{proof}
\section{Dual version of Dilworth's theorem}
Recall that the \emph{height} of a poset $P$ is the largest size of a chain in $P$. A theorem dual to Dilworth's theorem asserts that every poset of finite height $h$ can be decomposed into $h$ antichains.

\begin{conjecture}
	Every Borel poset $P$ of finite height $h$ is decomposable into $h$ Borel antichains.
\end{conjecture}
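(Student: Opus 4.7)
The natural approach is Mirsky's classical level decomposition. For each $x \in V$, define
\[
\rho(x) = 1 + \sup\{k : \exists\, y_1 \prec y_2 \prec \cdots \prec y_k \prec x\},
\]
so $\rho(x)$ is the length of a longest chain terminating at $x$. Since the height of $P$ is $h$, we have $\rho(x) \in \{1, 2, \ldots, h\}$, and the level sets $A_i = \rho^{-1}(i)$ partition $V$ into $h$ antichains. The entire problem reduces to showing that $\rho$ is Borel-measurable, or equivalently that the upper level sets $B_k = \{x : \rho(x) \geq k\}$ are Borel for $k = 1, \ldots, h$.

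A first pass reveals the standard pointclass obstacle. Each $B_k$ is the projection of the Borel set
\[
\{(y_1, \ldots, y_{k-1}, x) \in V^k : y_1 \prec y_2 \prec \cdots \prec y_{k-1} \prec x\}
\]
onto the last coordinate, hence analytic ($\Sigma^1_1$); its complement $V \setminus B_k$ is co-analytic ($\Pi^1_1$). By Suslin's theorem it suffices to produce, for each $k$, a second, $\Pi^1_1$ description of $B_k$ so that the two pointclasses agree. The plan is to extract such a description from the finite-rank well-foundedness of $\prec$.

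Two strategies look promising. The first invokes effective descriptive set theory: the Kunen--Martin theorem gives that every analytic well-founded relation has countable rank, and since ours is bounded by $h$ one expects, via hyperarithmetic boundedness arguments, that the rank function of a Borel well-founded relation of finite rank is itself Borel. The second is a recursive peeling: if one could Borel-select, for each $x \in B_k$, a witness $y \prec x$ with $y \in B_{k-1}$, then $B_k$ would inherit Borelness by downward induction from $B_1 = V$. Such a selection is supplied by Luzin--Novikov precisely when the predecessor sets $D_P(x) = \{y : y \preceq x\}$ are all countable, which is the hypothesis under which the paper's dual theorem in Section~3 goes through. A third, less direct route is to port $P$ to a realistic Borel poset on $[0, 1]$ via the Hladk\'y--M\'ath\'e--Patel--Pikhurko approximation, dualize the chain-side lemmas of Section~2, and iteratively peel off Borel maximal antichains from the resulting linearly arranged structure.

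The main obstruction is the selection step in full generality. For an arbitrary Borel poset the sections $D_P(x)$ may be uncountable, and then the projection of a Borel relation can be properly $\Sigma^1_1$; no finite Boolean combination of Borel sets promotes it to Borel. The hard part is therefore to argue either (a) that the finite-rank hypothesis on the well-founded relation $\prec$ really does collapse this projective complexity to the Borel pointclass, plausibly via an effective argument in the spirit of the Harrington--Merker--Shelah theorem on thin Borel posets, or (b) to manufacture Borel antichains directly, bypassing the canonical Mirsky levels entirely. Either route seems to require substantial new input, and it is here that Conjecture~2 departs from the locally countable setting and remains open.
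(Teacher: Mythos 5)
The statement you were given is Conjecture~2 of the paper, and the paper does not prove it: the authors state explicitly that they do not know whether it holds even for realistic posets, and Section~3 only establishes two partial results. Your write-up correctly refrains from claiming a proof, and your diagnosis is accurate. The Mirsky levels $A_i=\rho^{-1}(i)$ are the canonical candidate antichains, the upper level sets $B_k$ are a priori only analytic as projections of Borel sets, and (as you say, via Suslin) one would need a matching $\Pi^1_1$ description to conclude Borelness; no such description is known in general. Your ``recursive peeling'' route is essentially the paper's proof of its partial result for locally countable comparability graphs: the paper removes the set $X=\{v: N^-_G(v)=\emptyset\}$ of minimal elements, shows its complement $N^+_G(V)$ is Borel by the Lusin--Novikov theorem, and inducts on the height. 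One simplification relative to your sketch: no Borel uniformization or witness selection is needed there --- the projection form of Lusin--Novikov (countable sections imply Borel projection) already suffices, which is why local countability of the comparability graph is the only hypothesis. Your remark that in the general case one still gets measurable level sets is also the paper's Theorem on decomposition into $h$ \emph{measurable} antichains, since analytic sets are universally measurable. The Kunen--Martin/boundedness idea you float does not obviously help: a finite bound on the rank controls the ordinal length of the recursion but says nothing about the pointclass of an individual level set, and the Harrington--Marker--Shelah machinery is tailored to thinness, not to bounded height. In short, your assessment agrees with the paper's: the conjecture is open, and the locally countable and measurable fragments are exactly what is known.
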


We do not know if the statement of this conjecture holds even for realistic posets. However, we can prove it for any Borel poset whose comparability graph is locally countable.

We shall use the following Lusin-Novikov Theorem (see Kechris \cite{Kechris}). 
\begin{theorem}
	Let $X$ and $Y$ be standard Borel spaces and let $W\subseteq X\times Y$ be Borel. If every section $W_y=\{ x\in X:(x,y)\in W\}$ is countable, then the projection ${\rm proj}_Y(W)=\{y\in Y: (x,y)\in W\ {\rm for\ some}\ x\in X\}$ is Borel.
\end{theorem}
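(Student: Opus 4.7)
The plan is to reduce Borelness of ${\rm proj}_Y(W)$ to the stronger Borel uniformization statement: there exist Borel subsets $D_n\subseteq Y$ and Borel functions $f_n\colon D_n\to X$ for $n\in\mathbb{N}$ such that
\[
W=\bigcup_{n\in\mathbb{N}}\{(f_n(y),y):y\in D_n\}.
\]
Given such an uniformization, ${\rm proj}_Y(W)=\bigcup_n D_n$ is a countable union of Borel sets, hence Borel.

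To obtain the uniformization I would first invoke the standard change-of-topology theorem (see e.g.\ Kechris, \S13) to refine the Polish topology on $X$ without altering its Borel $\sigma$-algebra, so that $W$ becomes closed in the product; fix a countable basis $\{U_n\}_{n\in\mathbb{N}}$ for the refined topology on $X$. Each section $W_y$ is then a closed countable subset of a Polish space, so by the Cantor--Bendixson theorem every point of $W_y$ is isolated in $W_y$, and therefore is the unique element of $W_y\cap U_n$ for some $n$. The natural candidate decomposition is
\[
D_n=\{y\in Y:|W_y\cap U_n|=1\},\qquad f_n(y)=\text{the unique element of }W_y\cap U_n,
\]
possibly followed by a routine disjointification in $n$ to secure pairwise disjoint graphs.

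The main obstacle is verifying that each $D_n$ is Borel. One cannot simply express $D_n$ as a difference of projections of $W\cap(U_n\times Y)$, since the Borelness of such projections is essentially the statement we are trying to establish. The standard workaround is a transfinite induction along the Cantor--Bendixson rank of the countable closed sections $W_y$: at each successor rank one peels off the isolated points of the sections using Borel selection, invoking the Borelness already established at strictly smaller ranks, and limit stages are absorbed by countable intersections. Once the inductive framework yields Borelness of $D_n$ and $f_n$, the projection identity of the first paragraph finishes the proof.
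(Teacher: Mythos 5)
This statement is the classical Lusin--Novikov theorem, which the paper does not prove at all: it is quoted from Kechris's book and used as a black box. So the only question is whether your sketch actually constitutes a proof, and it does not; there are three concrete gaps. First, the change-of-topology step is wrong as stated: you cannot in general refine the Polish topology on $X$ alone (keeping its Borel $\sigma$-algebra) so that an arbitrary Borel $W\subseteq X\times Y$ becomes closed in the product topology. For example, let $g\colon\mathbb{R}\to\mathbb{R}$ be the indicator of the rationals and $W=\{(g(y),y):y\in\mathbb{R}\}$; every product neighbourhood $B\times V$ of a point $(0,y)$ with $y$ irrational meets $W$, no matter what topology $B$ comes from, so $W$ can never be made closed this way. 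Refining the topology on the whole product instead destroys exactly the feature you use, namely a basis of the form $U_n\times Y$ with $U_n$ ranging over a countable basis of $X$. Second, the Cantor--Bendixson theorem does not say that every point of a countable closed set is isolated: $\{0\}\cup\{1/n:n\geq 1\}$ is countable and closed, and $0$ is not isolated. Consequently your candidate decomposition with $D_n=\{y:|W_y\cap U_n|=1\}$ does not cover the points of $W$ lying in the derived set of a section, so the displayed identity $W=\bigcup_n\{(f_n(y),y):y\in D_n\}$ already fails before any Borelness issue arises.

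Third, and most seriously, the step you yourself flag as ``the main obstacle'' --- Borelness of the sets $D_n$, or more generally of the pieces produced at each stage of the peeling --- is precisely the content of the Lusin--Novikov theorem, and the proposal never supplies it. The sets that occur naturally in your construction, such as $\{y: W_y\cap U_n\neq\emptyset\}$ or the sets of $y$ whose section has a point of given Cantor--Bendixson rank in $U_n$, are a priori only analytic; converting them into Borel sets is where the real work lies, and it requires either the Lusin separation theorem (for countably many pairwise disjoint analytic sets) applied along a transfinite derivative analysis, or an effective argument via $\Delta^1_1$ points. Saying that ``the standard workaround is a transfinite induction along the Cantor--Bendixson rank \ldots invoking the Borelness already established at strictly smaller ranks'' names the shape of such an argument but does not give its inductive step, and without that step the argument is circular in exactly the way you acknowledge. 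As it stands the proposal is an outline of where a proof might go, not a proof; since the paper itself offers no proof to compare against, the appropriate fix is either to carry out the separation/derivative argument in full or simply to cite the theorem, as the paper does.
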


For a directed graph $G=(V,E)$ and a subset of vertices $A\subseteq V$, let us denote $N^+_G(A)=\{ u\in V: (v,u)\in E \ {\rm for\ some }\ v\in A\}$ and $N^-_G(A)=\{ u\in V: (u,v)\in E \ {\rm for\ some }\ v\in A\}$.

\begin{lemma}\label{lem4}
	Let $G=(V,E)$ be a locally countable directed Borel graph. Then for every Borel set $A\subseteq V$, the set $N^+_G(A)$ is Borel.
\end{lemma}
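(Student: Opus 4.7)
The plan is to realise $N^+_G(A)$ as the projection of a suitable Borel set $W \subseteq V \times V$ whose fibres over the coordinate being projected onto are countable, and then to invoke the Lusin--Novikov theorem stated just above the lemma.

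Concretely, I would set
\[
W = (A \times V) \cap E = \{(v,u) \in V \times V : v \in A \text{ and } (v,u) \in E\}.
\]
Since $A$ and $E$ are Borel, $W$ is Borel as the intersection of two Borel subsets of the standard Borel space $V \times V$. Directly from the definition,
\[
N^+_G(A) = \{u \in V : (v,u) \in E \text{ for some } v \in A\} = \mathrm{proj}_Y(W),
\]
where $Y = V$ denotes the second factor.

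To apply Lusin--Novikov with $X = Y = V$, it remains to check that every fibre $W_u = \{v \in V : (v,u) \in W\}$ (with $u$ fixed) is countable. But $W_u = \{v \in A : (v,u) \in E\}$ is contained in the set of in-neighbours of $u$ in $G$, which is countable by the local countability hypothesis. Lusin--Novikov then yields that $\mathrm{proj}_Y(W) = N^+_G(A)$ is Borel, finishing the proof.

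The only conceptual point worth flagging is the interpretation of \emph{locally countable} for a directed graph: the argument above uses the countability of the in-neighbourhoods $\{v : (v,u) \in E\}$, so I read local countability as saying that both in- and out-neighbourhoods of every vertex are countable. This is consistent with the paper's main application, where the underlying comparability graph is symmetric so the distinction is vacuous. Beyond this, the proof is essentially a one-line application of Lusin--Novikov, with no real obstacle.
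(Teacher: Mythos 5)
Your argument is correct and is essentially identical to the paper's proof: both take $W=(A\times V)\cap E$, note that each fibre over the second coordinate is contained in the (countable) in-neighbourhood of that vertex, and apply Lusin--Novikov to conclude that the projection $N^+_G(A)$ is Borel. Your remark on the meaning of local countability matches the paper's implicit usage, so there is nothing to add.
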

\begin{proof}
	Clearly, the set $W=(A\times V)\cap E$ is Borel. By local countability of $G$, for any $v\in V$, the set $$W_v=\{ u\in A: (u,v)\in E\}\subseteq N^-_G(v)$$ is countable. It follows from Lusin-Novikov Theorem that the set ${\rm proj}_{V}(W)=N^+_G(A)$ is Borel.
\end{proof}

\begin{proposition}\label{prop1}
	Let $G=(V,E)$ be a locally countable directed acyclic Borel graph in which the longest directed path has $\ell<\infty$ vertices. Then there is a partition of the vertex set of $G$ into $\ell$ independent Borel sets.
\end{proposition}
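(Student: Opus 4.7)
The plan is to define a Borel ``longest-path rank'' function on $V$ and use its level sets as the desired partition. For each $v\in V$, let $f(v)$ denote the number of vertices in a longest directed path in $G$ ending at $v$, so $f$ is a well-defined function $V\to\{1,2,\ldots,\ell\}$ since $G$ is acyclic and contains no directed path on more than $\ell$ vertices. The sets $A_i=f^{-1}(i)$ for $i=1,\ldots,\ell$ will be the independent Borel sets we are after.

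First I would verify that each $A_i$ is independent in $G$. Suppose $(u,v)\in E$ with $f(u)=f(v)=i$. Taking a longest directed path $u_1\to u_2\to\cdots\to u_i=u$ ending at $u$ and appending the edge $(u,v)$ yields a directed path on $i+1$ vertices ending at $v$ (acyclicity guarantees $v$ does not already appear on the path, since otherwise we would close a directed cycle). This contradicts $f(v)=i$, so $A_i$ is an independent set.

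Next I would show, by induction on $i$, that the sets $B_i=\{v\in V:f(v)\geq i\}$ are Borel. The base case $B_1=V$ is trivial. For the inductive step, observe that $f(v)\geq i+1$ holds precisely when some in-neighbor $u$ of $v$ satisfies $f(u)\geq i$; hence
\[
B_{i+1}=N^+_G(B_i).
\]
Since $G$ is a locally countable Borel digraph, Lemma \ref{lem4} applied to the Borel set $B_i$ gives that $B_{i+1}$ is Borel. By induction, every $B_i$ is Borel, and therefore $A_i=B_i\setminus B_{i+1}$ is Borel as well. Because $f$ takes values in $\{1,\ldots,\ell\}$, the sets $A_1,\ldots,A_\ell$ form a Borel partition of $V$ into independent sets.

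The only potential obstacle is the Borelness of the level sets $A_i$, which relies on the fact that the in-neighborhoods of a locally countable Borel digraph behave well under the projection operations. This is exactly what Lemma \ref{lem4} (itself an application of the Lusin--Novikov theorem) delivers, so the argument reduces to a clean finite induction once the rank function $f$ is in hand.
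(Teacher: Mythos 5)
Your proof is correct and takes essentially the same approach as the paper: both constructions yield the same canonical layering of the acyclic digraph (your level set $A_i$ coincides with the set of sources removed at stage $i$ of the paper's induction), and both obtain Borelness by iterating Lemma \ref{lem4}. The only difference is presentational---an explicit longest-path rank function with $B_{i+1}=N^+_G(B_i)$ versus induction on $\ell$ removing the Borel set $X=\{v:N^-_G(v)=\emptyset\}$ at each step.
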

\begin{proof}
	We apply induction on $\ell$. The statement is trivially true for $\ell=1$. To prove the induction step consider the set $X=\{ v\in V: N^-_G(v)=\emptyset\}$. Clearly, the set $X$ is independent and contains all initial vertices of directed paths of length $\ell$. Observe that $N^+_G(V)=V-X$, so by Lemma \ref{lem4},  $X$ is Borel. By removing $X$ from $G$ we get a graph $G'=(V-X,E')$. This graph is Borel because $E'=E\cap (V-X)^2$. Obviously, longest directed paths in $G'$ have $\ell-1$ vertices, so we are done by the induction hypothesis.
\end{proof}
The above proposition gives immediately the following result.
\begin{theorem}\label{cor1}
	Every Borel poset of finite height $h$ whose comparability graph is locally countable has a partition into $h$ Borel antichains.
\end{theorem}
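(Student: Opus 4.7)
The plan is to reduce Theorem \ref{cor1} to Proposition \ref{prop1} by passing from the poset $P=(V,\preceq)$ to the directed graph $G=(V,E)$ of its strict order, namely $E=\{(u,v)\in V\times V:u\preceq v\text{ and }u\neq v\}$. The independent sets of $G$ are then exactly the antichains of $P$, so any partition of $V$ into $h$ independent Borel subsets produced by Proposition \ref{prop1} will be the desired partition into $h$ Borel antichains.

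The main work, which is entirely routine, is to check that $G$ satisfies the four hypotheses of Proposition \ref{prop1}. First, $G$ is Borel: $E$ is the difference of the Borel set $\{(u,v):u\preceq v\}$ (Borel by hypothesis on $P$) and the diagonal of $V\times V$. Second, $G$ is acyclic: a directed cycle in $G$ would yield a chain of strict inequalities returning to its start, contradicting antisymmetry of $\preceq$. Third, $G$ is locally countable: the neighborhood of any $v\in V$ in the underlying undirected graph of $G$ is contained in the neighborhood of $v$ in the comparability graph of $P$, which is countable by hypothesis. Fourth, the longest directed path in $G$ has exactly $h$ vertices: a directed path $v_1,v_2,\ldots,v_m$ in $G$ means $v_1\prec v_2\prec\cdots\prec v_m$, i.e., a chain of size $m$ in $P$, so the maximum such $m$ is precisely the height $h$.

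With these four properties established, Proposition \ref{prop1} applies to $G$ and yields a partition $V=A_1\cup\cdots\cup A_h$ into Borel sets that are independent in $G$. To conclude, I would observe that an independent set $A_i$ in $G$ consists of pairwise incomparable elements of $P$: if $u,v\in A_i$ with $u\neq v$ and $u,v$ were comparable, then either $u\prec v$ or $v\prec u$, giving an edge of $G$ inside $A_i$, a contradiction. Hence each $A_i$ is a Borel antichain in $P$, completing the proof.

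I do not expect any real obstacle in this argument, as the substance of the theorem lives in Proposition \ref{prop1} and in the Lusin--Novikov theorem used there; the only point worth flagging is that the reduction implicitly uses the hypothesis that the \emph{comparability} graph (not merely the strict-comparability directed graph) is locally countable, which ensures the local countability of $G$ without any additional assumption.
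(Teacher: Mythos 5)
Your proposal is correct and is essentially the paper's argument: the paper derives Theorem \ref{cor1} immediately from Proposition \ref{prop1} via exactly this reduction, viewing the strict order as a locally countable directed acyclic Borel graph whose longest directed path has $h$ vertices and whose independent sets are the antichains. You have merely spelled out the routine verifications that the paper leaves implicit.
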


Recall that projections of Borel sets are measurable. Therefore, we can remove the assumption of local countability in Proposition \ref{prop1} (resp. Theorem \ref{cor1}) and prove the existence of appropriate partition into \emph{measurable} (instead of Borel) independent sets (resp. antichains). In particular we get the following statement.

\begin{theorem}\label{prop2}
	Every Borel poset of finite height $h$ can be decomposed into $h$ measurable antichains.
\end{theorem}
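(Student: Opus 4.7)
My plan is to follow essentially the same argument as in Proposition~\ref{prop1}, noting that the only place where local countability of the comparability graph is used is in the appeal to Lemma~\ref{lem4}. Without local countability, for any Borel set $A\subseteq V$ the set $N^+_G(A)=\{v\in V:(u,v)\in E\text{ for some }u\in A\}$ is no longer guaranteed to be Borel, but it is the projection onto the second coordinate of the Borel set $(A\times V)\cap E$, so it is analytic and therefore universally measurable. This gives a measurable analogue of Lemma~\ref{lem4} with essentially the same proof.

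Rather than iterate the induction on the height inside an increasingly non-Borel subspace, I would package the decomposition as the level sets of a single height function. Orient the comparability relation as the directed acyclic Borel graph $G=(V,E)$ with $(u,v)\in E$ iff $u\prec v$, and for each $v\in V$ set
\[
h_P(v)=\max\{k\geqslant 1:\text{there exist }v_1,\ldots,v_k\in V\text{ with }v_1\prec v_2\prec\cdots\prec v_k=v\}.
\]
Since $P$ has height $h$, this defines a function $h_P\colon V\to\{1,\ldots,h\}$, and the candidate decomposition is $A_k=\{v\in V:h_P(v)=k\}$ for $k=1,\ldots,h$. Each $A_k$ is an antichain: if $u,v\in A_k$ with $u\prec v$, any chain of length $k$ witnessing $h_P(u)=k$ extends by $v$ to a chain of length $k+1$ ending at $v$, contradicting $h_P(v)=k$.

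Finally, I would verify measurability using the hint preceding the theorem: the set $\{v:h_P(v)\geqslant k\}$ is the projection onto the last coordinate of the Borel set $\{(v_1,\ldots,v_k)\in V^k:v_1\prec v_2\prec\cdots\prec v_k\}$, hence analytic and therefore universally measurable. Consequently each $A_k=\{v:h_P(v)\geqslant k\}\setminus\{v:h_P(v)\geqslant k+1\}$ is measurable, and $A_1,\ldots,A_h$ furnish the required partition. I do not anticipate a genuine obstacle: the author's observation that projections of Borel sets are measurable supplies the single new ingredient, and the height-function reformulation avoids the minor awkwardness of running the Proposition~\ref{prop1} induction through measurable-but-non-Borel subspaces of $V$.
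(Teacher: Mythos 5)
Your proof is correct, and it takes a cleaner route than the paper, which disposes of this theorem with the one-line remark that one can ``remove the assumption of local countability'' from Proposition~\ref{prop1} because projections of Borel sets are measurable. Taken literally, rerunning that induction has exactly the awkwardness you identify: once the first level $X$ (a co-analytic set) is removed, the remaining graph is no longer Borel, so Lemma~\ref{lem4} and the induction hypothesis do not apply verbatim, and one must additionally track that projections of analytic sets are still analytic, hence universally measurable, at every stage. Your height-function formulation produces the same partition (the levels $A_k=\{v:h_P(v)=k\}$ are precisely the sets obtained by iteratively removing minimal elements), but it certifies measurability in one shot: $\{v:h_P(v)\geqslant k\}$ is the projection of the Borel set of $\prec$-increasing $k$-tuples, hence analytic, and each $A_k$ is a difference of two such sets, hence universally measurable. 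So both arguments rest on the same single ingredient --- measurability of projections of Borel sets --- but your closed-form description of the levels avoids the induction through non-Borel subspaces and, arguably, fills in the detail the paper's remark glosses over; the paper's phrasing, in exchange, emphasizes that the Borel and measurable statements are instances of one argument scheme.
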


\section{Some remarks}

Decomposition of a poset $P$ into chains is clearly equivalent to a proper coloring of its incomparability graph $G_P$. It is not hard to see that if $P$ is a Borel poset, then $G_P$ is a Borel graph. Hence, Conjecture 1 is equivalent to the statement that every Borel poset $P$ of finite width satisfies
 $$\chi_B(G_P)=\chi(G_P),$$
 where $\chi_B(G)$ denotes the \emph{Borel chromatic number} of a Borel graph $G$, that is, the least number of Borel independent sets covering the vertex set of $G$. The idea of studying graph theoretic concepts in the tilth of Borel spaces was introduced by Kechris, Solecki, and Todorcevic \cite{KechrisST}, and further developed by many researchers (see a survey paper by Kechris and Marks \cite{KechrisMarks})

Consider a locally countable undirected Borel graph $G=(V,E)$. It is not hard to see that components of $G$ have countably many vertices. This graph defines a Borel equivalence relation $E_G$ on the set $V$:
\[
xE_Gy {\rm \ if\ there \ is \ a\ (finite)\ path\ from} \ x\ {\rm to}\ y.
\]
A Borel set $A\subseteq V$  is said to be a \emph{Borel transversal} for an equivalence relation $E$ if $A$ intersects every $E$-class in exactly one point. A Borel equivalence relation $E\subseteq V\times V$ is {\it smooth} if $E$ admits a Borel transversal. The following result was proved by Conley and Miller in \cite{ConleyMiller}.
\begin{theorem}\label{prop3}
	If $G$ is a locally countable Borel graph for which $E_G$ is smooth, then $$\chi_B(G) =  \chi(G).$$
\end{theorem}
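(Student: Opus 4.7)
The plan is to combine smoothness with local countability to Borel-enumerate each connected component, and then take the lexicographically least proper $k$-coloring of each component in a uniform Borel way.

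First, since $G$ is locally countable, each connected component (i.e., each $E_G$-class) is countable. By smoothness of $E_G$, fix a Borel transversal $T\subseteq V$ and the Borel projection $\pi\colon V\to T$ sending each $v$ to the unique $t\in T$ with $v\,E_G\,t$. Local countability of $G$ lets us apply Lusin-Novikov to the Borel edge relation $E$, writing $E=\bigcup_{n}\mathrm{graph}(g_n)$ for Borel partial functions $g_n$. A Borel breadth-first search from $\pi(v)$ using the $g_n$'s then produces a Borel function $\nu\colon V\to\mathbb{N}$ that is injective on each $E_G$-class; equivalently, $v\mapsto(\pi(v),\nu(v))$ is a Borel embedding $V\hookrightarrow T\times\mathbb{N}$ that realizes each component as a subset of $\mathbb{N}$.

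Now set $k=\chi(G)$. For each $t\in T$, the graph on $[t]_{E_G}$ has chromatic number at most $k$, so the set
\[
\Gamma_t=\{c\in\{1,\dots,k\}^{\mathbb{N}}:\, c(\nu(u))\neq c(\nu(v))\text{ whenever }uv\text{ is an edge with }u,v\in[t]_{E_G}\}
\]
is a non-empty closed subset of the compact Polish space $\{1,\dots,k\}^{\mathbb{N}}$, and the set $\{(t,c):c\in\Gamma_t\}\subseteq T\times\{1,\dots,k\}^{\mathbb{N}}$ is Borel. I would then let $\sigma_t$ be the lexicographically least element of $\Gamma_t$, defined recursively: given $\sigma_t(0),\dots,\sigma_t(n-1)$, take $\sigma_t(n)$ to be the smallest $j$ such that
\[
\Gamma_t\cap\{c:c(i)=\sigma_t(i)\text{ for }i<n,\ c(n)=j\}\neq\emptyset.
\]
The desired Borel $k$-coloring of $G$ is then $v\mapsto\sigma_{\pi(v)}(\nu(v))$.

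The main obstacle is proving that the map $t\mapsto\sigma_t$ is Borel. This reduces to the following descriptive-set-theoretic selection principle: if $B\subseteq T\times Y$ is Borel with every section $B_t$ closed in a compact Polish space $Y$, then $\mathrm{proj}_T(B)$ is Borel and $B$ admits a Borel uniformization. This is a standard consequence of Arsenin-Kunugui-type theorems (see Kechris, \emph{Classical Descriptive Set Theory}, \S 28 and \S 35). Applying it iteratively to the closed cylinders appearing in the construction of $\sigma_t$ shows that each $\sigma_t(n)$ is Borel in $t$, whence so is the full coloring $v\mapsto\sigma_{\pi(v)}(\nu(v))$. This gives $\chi_B(G)\leqslant\chi(G)$, and the reverse inequality is immediate.
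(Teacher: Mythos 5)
The paper does not actually prove this statement --- it is quoted from Conley and Miller \cite{ConleyMiller} --- so your self-contained argument is necessarily a different route from the text; it is in fact the standard proof of this fact, and it is essentially correct. Smoothness plus Lusin--Novikov give the Borel selector $\pi$ and a Borel map $\nu$ injective on each class (one can also get $\nu$ more directly by applying Lusin--Novikov to the Borel relation $E_G$ itself rather than running a breadth-first search); for $k=\chi(G)$ finite, each $\Gamma_t$ is a nonempty compact subset of $\{1,\dots,k\}^{\mathbb{N}}$; and the lexicographically least selection is Borel because projections of Borel sets with compact sections are Borel (Novikov; Kechris \cite{Kechris}, \S 28), applied to $B\cap (T\times N_s)$ for each finite cylinder $N_s$ --- you do not even need a full uniformization theorem, since the lex-least choice is the uniformization. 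Two points deserve explicit justification. First, the Borelness of $B=\{(t,c):c\in\Gamma_t\}$ is not immediate from your definition, which quantifies over all pairs of vertices; but the set of violations $\{(t,c,u,v):(u,v)\in E,\ \pi(u)=t,\ c(\nu(u))=c(\nu(v))\}$ has countable sections over $(t,c)$, so its projection is Borel by Lusin--Novikov and $B$ is its complement. Second, your compactness argument tacitly assumes $\chi(G)<\infty$; if $\chi(G)$ is infinite it equals $\aleph_0$ by local countability, and then $\chi_B(G)\leqslant\aleph_0$ follows from Kechris--Solecki--Todorcevic \cite{KechrisST}, so the theorem still holds --- and in this paper's application to posets of finite width $k$ is finite anyway. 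With these two remarks added, your argument is complete.
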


This immediately implies the following result.

\begin{theorem}\label{prop3}
	If $P$ is a Borel poset of finite width $k$ whose incomparability graph $G_P$ is locally countable with a smooth relation $E_{G_P}$, then $P$ can be decomposed into $k$ Borel chains.
\end{theorem}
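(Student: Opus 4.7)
The plan is to read the statement as a direct coloring-theoretic consequence of the preceding Conley--Miller theorem, using the dictionary ``chain decomposition of $P$ $=$ proper coloring of $G_P$'' that is spelled out at the start of Section 4. So the proof will just connect three ingredients: the coloring reformulation, the classical (non-Borel) Dilworth theorem for infinite posets of finite width, and the smoothness hypothesis feeding into Conley--Miller.

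First I would observe that $G_P$ is a Borel graph: its edge set is $\{(x,y)\in V\times V : x\neq y,\ (x,y)\notin E\ \text{and}\ (y,x)\notin E\}$, where $E=\{(x,y):x\preceq y\}$ is Borel by hypothesis, so the complement and the intersection give a Borel edge relation. An independent set in $G_P$ is by definition a set of pairwise comparable elements, i.e.\ a chain in $P$. Therefore a partition of $V$ into $k$ Borel chains is exactly a Borel proper $k$-coloring of $G_P$, and hence what we want is the inequality $\chi_B(G_P)\leqslant k$.

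Next I would compute the ordinary chromatic number $\chi(G_P)$. Since $P$ has finite width $k$, the Compactness Principle (as recalled in the Introduction) gives a partition of $V$ into $k$ chains, i.e.\ a proper $k$-coloring of $G_P$, so $\chi(G_P)\leqslant k$. Conversely, an antichain of size $k$ in $P$ is a clique of size $k$ in $G_P$, forcing $\chi(G_P)\geqslant k$. Thus $\chi(G_P)=k$.

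Finally I would invoke the preceding Conley--Miller theorem: $G_P$ is a locally countable Borel graph whose equivalence relation $E_{G_P}$ is smooth by assumption, so $\chi_B(G_P)=\chi(G_P)=k$. Unpacking this equality gives a partition $V=A_1\cup\cdots\cup A_k$ into Borel independent sets of $G_P$, which are Borel chains of $P$, as required. There is no real obstacle here beyond bookkeeping; the substantive content is entirely packaged inside the cited theorem of Conley and Miller, and the only ``work'' on our side is to verify that the incomparability graph of a Borel poset is itself Borel and to recall Dilworth's theorem in the infinite-but-finite-width setting.
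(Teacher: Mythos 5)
Your proposal is correct and follows exactly the route the paper intends: the paper states the result as an immediate consequence of the Conley--Miller theorem, and your argument simply fills in the same bookkeeping (Borelness of $G_P$, chains $=$ independent sets, and $\chi(G_P)=k$ via the Compactness Principle plus an antichain giving a $k$-clique). Nothing further is needed.
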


Finally, let us consider a measurable version of Conjecture 1. Assume now that $P=(V,\preceq)$ is a \emph{measurable} poset, that is, an \emph{ordered probability space} with some probabilistic measure $\mu$ (see \cite{HladkyMPP}). A surprising result, conjectured by Janson \cite{Janson} and proved by Hladk\'{y}, M\'{a}th\'{e}, Patel, and Pikhurko \cite{HladkyMPP}, asserts that any atomless measurable poset can be \emph{included} into some realistic (Lebesgue) measurable poset (Theorem 1.10). In view of this and our Theorem 2, the following conjecture seems plausible.

\begin{conjecture}
	Every measurable poset $P$ of finite width $k$ is decomposable (up to a null set) into $k$ measurable chains. In other words, $\chi_{\mu}(G_P)=k$, where $\chi_{\mu}(G)$ denotes the measurable chromatic number of $G$.
\end{conjecture}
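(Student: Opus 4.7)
The plan is to deduce Conjecture 3 from the Hladk\'{y}, M\'{a}th\'{e}, Patel, and Pikhurko embedding theorem (Theorem 1.10 of \cite{HladkyMPP}) combined with a measurable analogue of Theorem \ref{Dilworth}. The HMP result supplies, for any atomless measurable poset $(V,\preceq,\mu)$, a realistic Lebesgue-measurable poset $Q$ on $[0,1]$ and a measure-preserving map $\phi:(V,\mu)\to([0,1],\mathrm{Leb})$ which is an order isomorphism onto its image on a $\mu\times\mu$-co-null subset of $V\times V$. Applying a measurable chain decomposition to $Q$ and pulling it back along $\phi$ will produce the required decomposition of $P$.

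First I would dispose of the atomic part. Atoms of $\mu$ form a countable set $A\subseteq V$, on which the classical Dilworth theorem trivially yields a partition into $k$ measurable chains. On the atomless remainder $V\setminus A$ we apply HMP to obtain $\phi$ and $Q$. Next, I would verify that the proofs of Lemmas \ref{lem0}--\ref{lem2} and of Theorem \ref{max_chain} go through with ``Borel'' replaced throughout by ``Lebesgue measurable'': combinatorially Lemmas \ref{lem0}--\ref{lem2} are measure-blind, while the section argument in the proof of Theorem \ref{max_chain} yields a measurable maximal chain as soon as each $I_Q(x)$ is measurable (which holds after passing to a co-null Borel subset of $[0,1]$ if necessary). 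Thus the measurable analogue of Theorem \ref{Dilworth} partitions any realistic Lebesgue-measurable poset of finite width $w$ into $w$ Lebesgue-measurable chains.

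It remains to arrange that $Q$ has width at most $k$ so that the measurable Dilworth above produces exactly $k$ chains. Let $R_{k+1}\subseteq[0,1]^{k+1}$ be the Lebesgue-measurable set of $(k+1)$-tuples forming an antichain in $Q$. Since $P$ has width $k$ and $\phi$ is an order-embedding on a co-null set, for $\mu^{k+1}$-almost every tuple $(x_1,\dots,x_{k+1})$ the image $(\phi(x_1),\dots,\phi(x_{k+1}))$ lies outside $R_{k+1}$; combined with $\phi_{*}\mu=\mathrm{Leb}$ and Fubini this forces $\mathrm{Leb}^{k+1}(R_{k+1})=0$. I would then extract a co-null Borel subset $B\subseteq[0,1]$ such that $Q|_B$ has width at most $k$, apply the measurable Dilworth to get $k$ chains $C_1,\dots,C_k$, and pull back along $\phi$ to obtain $k$ measurable chains of $P$ covering a co-null subset of $V\setminus A$. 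Together with the atomic part this yields the required decomposition modulo a $\mu$-null set.

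The main obstacle is the width-reduction step. Knowing that $\mathrm{Leb}^{k+1}(R_{k+1})=0$ does not immediately provide a full-measure $B\subseteq[0,1]$ on which no $(k+1)$-antichain of $Q$ survives, because the coordinate projection of a Lebesgue-null $(k+1)$-dimensional set may have positive one-dimensional measure. The cleanest resolution would be to sharpen the HMP construction so that the ambient realistic poset is built with width equal to that of $P$ from the outset; alternatively one can attempt a Lusin--Novikov style disintegration of $R_{k+1}$ and iteratively prune witnessing antichains in a measurable fashion, using that each pruning step discards only a $\mathrm{Leb}$-null set. Either route requires a delicate reading of \cite{HladkyMPP} and is where I expect the principal technical difficulty to lie.
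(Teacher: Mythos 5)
The statement you are addressing is Conjecture 3 of the paper: the authors do not prove it. They only remark that, in view of the Hladk\'y--M\'ath\'e--Patel--Pikhurko inclusion theorem and their Theorem 2, the conjecture ``seems plausible.'' Your proposal is precisely an attempt to carry out the program hinted at in that closing remark, and the obstacle you run into is presumably why the statement is left as a conjecture rather than a theorem. The fatal gap is the one you yourself flag: the HMP theorem includes $P$ into \emph{some} realistic measurable poset $Q$ on $[0,1]$, with no control on the width of $Q$. Your attempted repair --- deducing from $\mathrm{Leb}^{k+1}(R_{k+1})=0$ the existence of a co-null $B\subseteq[0,1]$ with $Q|_B$ of width at most $k$ --- fails exactly as you concede: a $\mathrm{Leb}^{k+1}$-null set of $(k+1)$-tuples can meet $B^{k+1}$ for every co-null $B$ (its coordinate projections can be all of $[0,1]$), so one cannot kill all surviving $(k+1)$-antichains by deleting a null set of points. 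Without a width bound on $Q$, even a fully measurable Dilworth theorem for realistic posets returns $w(Q)$ chains, and $w(Q)$ may exceed $k$ or be infinite. Since the ``sharpened HMP construction'' and the ``measurable pruning'' you invoke are not supplied, the argument does not close.

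Two further points would need care even if the width problem were solved. First, the measurable analogue of Theorem 1 is not automatic: its proof uses that the sections $I_P(x)$ of the order relation are Borel, whereas a merely Lebesgue-measurable relation on $[0,1]^2$ has measurable sections only for almost every $x$; the countably many elements $b_1,b_2,\ldots$ produced by Lemma 3 are dictated by the combinatorics and may land in the exceptional null set, and passing to a co-null Borel subset of $[0,1]$ does not obviously repair this. Second, the precise form of the HMP inclusion matters: if the order isomorphism onto the image holds only for $\mu\times\mu$-almost every pair, then the pullback of a chain of $Q$ is a set on which comparability holds only almost everywhere, which is not a chain. In summary, your proposal elaborates the authors' own heuristic but does not constitute a proof; the statement remains an open conjecture.
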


\end{document}